\theoremstyle{definition}
\newtheorem{theorem}{Theorem}[section]
\newtheorem{definition}[theorem]{Definition}
\newtheorem{remark}[theorem]{Remark}
\newtheorem{lemma}[theorem]{Lemma}
\newtheorem{coro}[theorem]{Corollary}
\numberwithin{equation}{section}
\newcommand{\abs}[1]{\left\lvert#1\right\rvert}
\newcommand{\R}{\mathbb R}
\newcommand{\N}{\mathbb N}
\renewcommand{\epsilon}{\varepsilon}
\newcommand{\A}{\mathcal A}
\newcommand{\dx}{\, \mathrm d}
\newcommand{\ee}{\mathrm{e}}
\newcommand{\id}{\mathrm{Id}}
\newcommand{\fra}{\mathfrak a}
\newcommand{\B}{\mathcal B}
\renewcommand{\phi}{\varphi}
\renewcommand{\L}{\operatorname{L}} 
\renewcommand{\H}{\operatorname{H}} 
\begin{document}
\allowdisplaybreaks
\title{Nash's $G$ bound for the Kolmogorov equation}

\date{\today}
\thanks{We thank Cl\'ement Mouhot and Rico Zacher for fruitful discussions. Lukas Niebel is funded by the Deutsche Forschungsgemeinschaft (DFG, German Research Foundation) under Germany's Excellence Strategy EXC 2044 --390685587, Mathematics M\"unster: Dynamics--Geometry--Structure.}

\author{Helge Dietert}
\address[Helge Dietert]{Universit\'e Paris Cit\'e and
  Sorbonne Universit\'e, CNRS\\ IMJ-PRG, F-75006 Paris, France.}
\email{helge.dietert@imj-prg.fr}

\author{Lukas Niebel}
\address[Lukas Niebel]{Institut f\"ur Analysis und Numerik,  Westf\"alische Wilhelms-Universit\"at M\"unster\\
Orl\'eans-Ring 10, 48149 M\"unster, Germany.}
\email{lukas.niebel@uni-muenster.de }

\begin{abstract}
We prove Nash's $G$ bound for the Kolmogorov equation with rough coefficients. 
Our proof is inspired by the treatment of the parabolic problem by Nash~(1958) and Fabes and Stroock~(1986). 
To transfer their ideas to the kinetic setting, we employ critical kinetic trajectories. 
From Nash's $G$ bound, we recover the sharp lower bound on the fundamental solution and thus provide an alternative proof of the Harnack inequality for the Kolmogorov equation. 
\end{abstract}
\maketitle

\section{Introduction}
We are concerned with weak solutions $f \in \L^2_{t,x}((0,T) \times \R^d; \H_v^1(\R^d))$ satisfying the Kolmogorov equation 
\begin{equation} \label{eq:int:kol}
  (\partial_t  + v \cdot \nabla_x) f - \nabla_v \cdot ( \mathfrak a \nabla_v f) = 0
\end{equation}
in the distributional sense, where $T>0$. The matrix-valued function $\mathfrak a = \mathfrak a (t,x,v)\in \L^\infty((0,T) \times \R^{2d}; \R^{d \times d})$ is a rough diffusion coefficient satisfying the ellipticity bounds
\begin{enumerate}
\item[] \begin{equation} \label{eq:int:H1}
	0< \lambda := \inf\limits_{\substack{0 \neq \xi \in \R^d,\\  (t,x,v) \in (0,T) \times \R^{2d}}} \frac{\langle \fra(t,x,v) \xi , \xi \rangle}{\abs{\xi}^2}, \tag{\textbf{H1}}
\end{equation}
\item[] \begin{equation} \label{eq:int:H2}
	\Lambda := \sup\limits_{\substack{0 \neq \xi \in \R^d,\\ (t,x,v) \in (0,T) \times \R^{2d}}} \frac{\abs{ \fra (t,x,v) \xi}^2}{\langle\fra (t,x,v) \xi, \xi \rangle}<\infty.\tag{\textbf{H2}}
\end{equation}
\end{enumerate}
We refer to \cite{auscher_weak_2024,dmnz_critical_2025} for more details on the definition of weak solutions and how to work with them. 

The kinetic De Giorgi--Nash--Moser theory is concerned with proving a priori
H\"older continuity of weak solutions to \eqref{eq:int:kol}.  It is inspired by
the treatment of De Giorgi \cite{de_giorgi_sulla_1957}, Nash
\cite{nash_continuity_1958}, and Moser
\cite{moser_harnack_1964,moser_harnacks_1961,moser_pointwise_1971,moser_correction_1967}
of the elliptic and parabolic problems.  The H\"older continuity was first
obtained in \cite{wang_calpha_2011} and strengthened later by a proof of the
stronger Harnack inequality in the influential paper \cite{golse_harnack_2019}.
We mention only the subsequent improvements
\cite{guerand_log-transform_2022,guerand_quantitative_2022,
  dietert2022regularity,dmnz_critical_2025} and refer to the introduction of
\cite{dmnz_critical_2025} for more details.  The existing literature mainly
follows the ideas of De~Giorgi and Moser.

In this article, we explain how to transfer Nash's ideas to the kinetic setting.
Let us first comment on his treatment of the parabolic problem.  Nash's toolbox
consisted of an $\L^\infty$ bound, a bound on the first spatial moment, and the
so-called $G$ bound, i.e.\ a universal lower bound for the spatial integral of
the logarithm of the fundamental solution.  The $G$ bound prevents the kernel
from being too small on large sets and, combined with the moment control, yields
a quantitative overlap of kernels started from nearby points.  This overlap
gives scale-invariant decay of oscillation, from which local H\"older continuity
of weak solutions follows.

Still in the parabolic setting, Aronson later established two-sided Gaussian
bounds for the fundamental solutions to the parabolic problem with rough
coefficients \cite{MR217444} based on the work of Nash. Moreover, he rigorously
established the existence of the fundamental solution \cite{MR435594} and gave
an alternative proof of the lower bound based on the iteration of Moser's
parabolic Harnack inequality.  Nash's proof was revisited and simplified by
Fabes and Stroock in \cite{MR855753}, where they also demonstrated how to obtain
sharp bounds on the fundamental solution directly from the $G$ bound.  Together
with the representation formula for weak solutions, one may derive Moser's
parabolic Harnack inequality and deduce H\"older continuity as a corollary.

Let us now recall the state-of-the-art in the kinetic setting. 
The existence of the fundamental solution to the Kolmogorov equation \eqref{eq:int:kol} with rough coefficients has been proven in \cite{auscher_fundamental_2024}. 
There exists a measurable kernel $\Gamma(t,x,v,s,y,w)$, defined for $s<t$, which is a fundamental solution to the problem \eqref{eq:int:kol}. 
Weak solutions, in the above sense, are represented by this kernel. 
We refer to \cite{auscher_fundamental_2024} for precise statements.

The fundamental solution satisfies the pointwise upper bound
\begin{equation} \label{eq:upper}
	\Gamma(t,x,v,s,y,w) \le \frac{C_0}{(t-s)^{2 d}} \exp\left(-C_1\left(\frac{|x-y-(t-s) w|^2}{(t-s)^3}+\frac{|v-w|^2}{t-s}\right)\right)
\end{equation}
with $C_0 = C_0(d,\lambda,\Lambda)>0$ and $C_1 = C_1(\lambda,\Lambda)>0$, see \cite[Theorem
4.12]{auscher_fundamental_2024}.  Actually, in \cite{auscher_fundamental_2024}
the estimate was proven only for $\abs{\Gamma}$, but $\Gamma \ge 0$ by the maximum principle
for weak solutions.  We emphasise that the proof in
\cite{auscher_fundamental_2024} of the upper bound relies on the
$\L^2$--$\L^\infty$ estimate proven in the context of kinetic De Giorgi--Nash--Moser
theory, see \cite{pascucci_mosers_2004,golse_harnack_2019,dmnz_critical_2025},
and in turn on the kinetic Sobolev inequality.  Using the Harnack inequality,
\cite[Theorem~1.6]{golse_harnack_2019} or
\cite[Theorem~7.5]{dmnz_critical_2025}, a lower bound on $\Gamma$ in terms of the
profile in \eqref{eq:upper} can be deduced via so-called Harnack chains for
kinetic cylinders, see \cite{MR4181953}.

\bigskip

In this article, we provide an alternative approach to prove the lower bound by following the ideas of Nash. 
We make precise connections between Nash's and Moser's work and give a trajectorial proof which does not rely on a spatial Poincar\'e inequality with Gaussian weight as is used in \cite{nash_continuity_1958,MR855753}.
Our main contribution is the following kinetic version of Nash's $G$ bound. 

\begin{lemma} \label{lem:nashG}
	There exists a constant $C = C(d,\lambda,\Lambda) < \infty$ such that for all $\abs{(x,v)} \le 1$ we have
	\begin{equation} \label{eq:nashG}
		\int_{\R^{2d}} \ee^{-\pi(\abs{y}^2+\abs{w}^2)} \log \Gamma(1,y,w,0,x,v) \dx (y,w) \ge - C.
	\end{equation}
\end{lemma}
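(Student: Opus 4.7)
The plan is to follow Nash's entropy scheme --- differentiate a log-integral of $\Gamma$ in time --- but with a time-dependent Gaussian weight transported by the free drift $\partial_t + v \cdot \nabla_x$, and to close the argument using critical kinetic trajectories in place of the spatial Gaussian Poincar\'e inequality used in the parabolic case.

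First, I exploit the Galilean invariance of the kinetic structure. The substitution $(y, w) \mapsto (y + x + tv, w + v)$ intertwines $\Gamma_{\fra}(\cdot, \cdot, \cdot, 0, x, v)$ with the fundamental solution starting at $(0, 0)$ for the shifted coefficient $\tilde \fra(t, y, w) := \fra(t, y + x + tv, w + v)$, which satisfies \eqref{eq:int:H1} and \eqref{eq:int:H2} with the same constants. Since $|(x, v)| \le 1$, the Gaussian weight becomes shifted by a vector of norm bounded by a universal constant; combined with the at-most-quadratic growth of $\log \Gamma$ from \eqref{eq:upper} and mass conservation, this costs only an additive constant, so I may assume $(x, v) = (0, 0)$ from now on and write $\Gamma(t, y, w) := \Gamma(t, y, w, 0, 0, 0)$. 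Next, introduce the transported Gaussian weight
$$\rho_t(y, w) := \ee^{-\pi(|y - (t-1) w|^2 + |w|^2)}, \qquad t \in (0, 1],$$
which satisfies $\partial_t \rho_t + w \cdot \nabla_y \rho_t = 0$ and $\rho_1(y, w) = \ee^{-\pi(|y|^2 + |w|^2)}$, and set $G(t) := \int_{\R^{2d}} \rho_t(y, w) \log \Gamma(t, y, w) \dx(y, w)$. Differentiating in $t$ and using the equation $\partial_t \Gamma + w \cdot \nabla_y \Gamma = \nabla_w \cdot (\tilde\fra \nabla_w \Gamma)$, the transport term cancels $\partial_t \rho_t$ exactly after an integration by parts in $y$; a further integration by parts in $w$ combined with Cauchy--Schwarz in the $\tilde\fra$-inner product and \eqref{eq:int:H1}--\eqref{eq:int:H2} yields
$$G'(t) \ge \frac{\lambda}{2} \int_{\R^{2d}} \rho_t |\nabla_w \log \Gamma|^2 \dx(y, w) - C_1,$$
where $C_1 = C_1(d, \lambda, \Lambda)$ controls the uniformly bounded Gaussian moment $\int |\nabla_w \rho_t|^2/\rho_t \dx(y, w)$ for $t \in [0, 1]$.

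The main obstacle is to convert this one-sided inequality into a lower bound for $G(1)$ that is uniform in the singular initial condition. Because $\Gamma(t, \cdot) \to \delta_{(0, 0)}$ as $t \to 0^+$, one has $G(t) \to -\infty$, so integrating $G'(t) \ge -C_1$ from $t = 0$ gives nothing; what is needed is a self-correcting ODE of the form $G'(t) \ge c (G(t) - M(t))^2 - C$ with $M(t)$ the $\L^\infty$ upper bound from \eqref{eq:upper}, whose solution is universally bounded below at $t = 1$. In the parabolic case this self-correction is exactly what the Gaussian Poincar\'e inequality delivers, converting the dissipation term into $c (G - M)^2$. Here the dissipation acts only in $w$ and no joint $(y, w)$-Poincar\'e is available, so I would introduce the \emph{family} of shifted Nash entropies
$$G^{x_0, v_0}(t) := \int_{\R^{2d}} \rho_t(y, w) \log \Gamma(t, y + x_0 + t v_0, w + v_0) \dx(y, w),$$
parameterised by a displacement $(x_0, v_0)$ varying over a ball, and derive a coupled differential system for this family. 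The $w$-dissipation controls variations of $G^{x_0, v_0}$ in $v_0$; variations in $x_0$ are then recovered from variations in $v_0$ along the free-transport characteristic via the hypoelliptic commutator $[\nabla_v, \partial_t + v \cdot \nabla_x] = \nabla_x$, which produces an effective $x$-regularity gaining a factor of $t$. Combined with the $\L^\infty$ upper bound \eqref{eq:upper} (capping the family from above) and mass conservation $\int \Gamma(t, y, w) \dx(y, w) = 1$ (preventing $\Gamma$ from being small on all of $\R^{2d}$), this trajectorial coupling should play the role of the missing spatial Poincar\'e inequality and close the argument via an ODE for $\min_{(x_0, v_0)} G^{x_0, v_0}(t)$, yielding $G(1) \ge -C$. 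The key technical difficulty lies in making the trajectorial coupling quantitative and uniform in the rough coefficient $\tilde\fra$.
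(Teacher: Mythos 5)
Your proposal correctly identifies the central obstruction: Nash's entropy $G(t)$ satisfies a dissipation inequality only in the $w$-variable, and the parabolic closure via a Gaussian Poincar\'e inequality on the full space is unavailable. But after naming this gap, you do not close it. The ``family of shifted Nash entropies'' $G^{x_0,v_0}(t)$ and the proposed coupling via the hypoelliptic commutator $[\nabla_v,\partial_t+v\cdot\nabla_x]=\nabla_x$ are gestured at, not carried out: no differential inequality for $\min_{(x_0,v_0)} G^{x_0,v_0}$ is derived, no mechanism is given by which the $w$-dissipation controls the $x_0$-variation, and you yourself flag that ``making the trajectorial coupling quantitative'' is the remaining difficulty. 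As written, the argument does not establish the bound.

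The paper takes a genuinely different route that sidesteps the ODE closure entirely. Instead of differentiating an entropy in time, it first proves a Moser-type weak $\L^1$ estimate (Lemma~\ref{lem:L1poin}) for the quantity $\log f(t,x,v)-c(f)$, where $c(f)$ is exactly the Gaussian average of $\log f$ at time $1$. The proof writes the difference $\log f(t,x,v)-\log f(1,y,w)$ as an integral along a critical kinetic trajectory connecting the two points, uses the supersolution inequality for $\log f$ to produce the quadratic dissipation term $\langle\fra\nabla_v\log f,\nabla_v\log f\rangle$, and then absorbs the linear terms into it via the criticality property \hyperlink{link:2c}{\textbf{(2c)}} of the trajectory matrices. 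This is precisely the kinetic substitute for the weighted Poincar\'e inequality that you were missing: the trajectories transfer $w$-dissipation to control on the whole $(t,x,v)$-domain. The $G$ bound then follows from Lemma~\ref{lem:L1poin} by a short ``good set'' argument, entirely distinct from an ODE: the upper bound \eqref{eq:upper} gives a first-moment bound, which with mass conservation localises at least half the mass of $\Gamma(t,\cdot)$ in a ball; the $\L^\infty$ bound then yields a set $\Omega$ of definite measure where $\Gamma>\eta$; Lemma~\ref{lem:L1poin} gives that $\log\Gamma - G(1)\le S$ on half of $\Omega$, and combining these two bounds on $\log\Gamma$ over $\Omega_S$ forces $G(1)\ge -C$. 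If you want to pursue the Nash-entropy ODE approach in the kinetic setting, the missing ingredient is a joint $(y,w)$ Gaussian--Poincar\'e inequality adapted to the hypoelliptic structure, and it is exactly this that the paper deliberately avoids by using trajectories instead.
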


Let us lay out our strategy to prove it.  We use critical kinetic trajectories
(Section~\ref{sec:kintraj}) developed in \cite{dmnz_critical_2025}, see also
\cite{guerand_quantitative_2022,niebel2023kinetic,anceschi2024poincare,MR4943511},
to prove an $\L^1$-estimate for the difference of the logarithm of the
fundamental solution and the integral on the left-hand side of \eqref{eq:nashG}.
Together with moment bounds (supplied by the upper bound \eqref{eq:upper}) we
deduce the universal $G$ bound in Section~\ref{sec:nashG}.  \medskip
 
From the kinetic $G$ bound, we infer the following sharp lower bound on the fundamental solution in Section \ref{sec:lower}. 

\begin{theorem} \label{thm:lower}
	There exist positive constants $c_0 = c_0(d,\lambda,\Lambda)$ and $c_1 = c_1(d,\lambda,\Lambda)$ such that
	\begin{align*}
		\frac{c_0}{(t-s)^{2 d}} \exp\left(-c_1\left(\frac{|x-y-(t-s) w|^2}{(t-s)^3}+\frac{|v-w|^2}{t-s}\right)\right) \le \Gamma(t,x,v,s,y,w)
	\end{align*}
	for almost all $s<t$ and $x,y,v,w \in \R^d$. 
\end{theorem}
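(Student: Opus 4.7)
My plan is to follow Nash's classical strategy, as streamlined by Fabes--Stroock \cite{MR855753}, bootstrapping Lemma~\ref{lem:nashG} into the pointwise Gaussian lower bound of Theorem~\ref{thm:lower}. First, I combine the $G$ bound with the upper bound~\eqref{eq:upper}: the latter forces $(\log\Gamma(1,y,w,0,x,v))_+\le\log C_0$ pointwise, so Lemma~\ref{lem:nashG} upgrades to a universal $L^1$ bound
\[
\int_{\R^{2d}}\ee^{-\pi(\abs{y}^2+\abs{w}^2)}\,\abs{\log\Gamma(1,y,w,0,x,v)}\dx(y,w)\le C,\qquad \abs{(x,v)}\le 1.
\]
Chebyshev's inequality then produces $\kappa=\kappa(d,\lambda,\Lambda)>0$ and, for each such $(x,v)$, a set $E_{x,v}$ in a fixed ball of measure $\ge\kappa$ on which $\Gamma(1,\cdot,\cdot,0,x,v)\ge\kappa$. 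Running the same argument on the adjoint/backward problem (whose coefficient shares \eqref{eq:int:H1}--\eqref{eq:int:H2}) gives symmetric sets $F_{x',v'}$ with $\Gamma(2,x',v',1,\cdot,\cdot)\ge\kappa$ for $\abs{(x',v')}\le 1$.

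Next, I would promote these integral estimates to a pointwise bound via the Chapman--Kolmogorov identity
\[
\Gamma(2,x',v',0,x,v)=\int_{\R^{2d}}\Gamma(2,x',v',1,y,w)\,\Gamma(1,y,w,0,x,v)\dx(y,w).
\]
A covering argument forces $\abs{E_{x,v}\cap F_{x',v'}}\ge c>0$ for admissible configurations, where $(x',v')$ lies in a kinetic neighbourhood of the drift image of $(x,v)$; this yields $\Gamma(2,x',v',0,x,v)\ge c$ pointwise on a fixed kinetic cylinder. The kinetic dilation $(t,x,v)\mapsto(r^2 t,r^3 x,rv)$ preserves both the form of~\eqref{eq:int:kol} and the ellipticity constants, so rescaling gives $\Gamma(t,x',v',0,x,v)\ge c_0/t^{2d}$ whenever $\abs{x'-x-tv}^2/t^3+\abs{v'-v}^2/t\le\rho^2$ for a universal $\rho>0$.

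For $(x',v')$ outside this near regime, I would iterate Chapman--Kolmogorov along $N$ equi-spaced waypoints following a critical kinetic trajectory connecting $(0,x,v)$ to $(t,x',v')$ (Section~\ref{sec:kintraj}), chosen so that consecutive pairs sit in the near regime on each subinterval of length $t/N$. Each of the $N$ factors contributes at least $c(N/t)^{2d}$ on a kinetic cylinder of volume $\sim(t/N)^{2d}$, yielding $\Gamma\gtrsim(c_0 c)^N t^{-2d}$. Taking $N$ proportional to the squared kinetic distance $\abs{x'-x-tv}^2/t^3+\abs{v'-v}^2/t$ then converts $(c_0 c)^N$ into the claimed Gaussian factor $\exp(-c_1(\cdots))$.

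The main obstacle will be this chain construction. The kinetic geometry is non-Euclidean because the drift couples $x$ to $v$, so the waypoints must follow a critical kinetic trajectory rather than a straight segment, and each local Chapman--Kolmogorov integration must be localised to kinetic cylinders aligned with that trajectory so that the per-step constants remain universal as $N$ varies. This is precisely where the critical kinetic trajectories of~\cite{dmnz_critical_2025} enter again, playing a role analogous to the Gaussian-weighted Poincar\'e inequality used in~\cite{MR855753}.
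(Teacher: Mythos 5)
Your overall plan is the Nash/Fabes--Stroock strategy that the paper also follows: (i) a near-diagonal pointwise lower bound from the $G$ bound and Chapman--Kolmogorov, (ii) scaling reduction, (iii) a chain argument to reach the Gaussian tail. But two of your steps, as stated, do not work.

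First, the near-diagonal bound. You propose to apply Chebyshev to the $L^1$ bound for $\abs{\log\Gamma}$ to get sets $E_{x,v}$ and $F_{x',v'}$ of measure $\ge\kappa$ on which $\Gamma\ge\kappa$, and then assert that ``a covering argument forces $\abs{E_{x,v}\cap F_{x',v'}}\ge c>0$''. Two sets each of measure $\kappa$ inside a fixed ball have no reason to intersect; a covering argument does not supply the overlap. The fix is to take the Chebyshev threshold $s$ so large ($s\sim C/\delta$) that each of the sets $\{\Gamma\ge\ee^{-s}\}$ carries Gaussian measure $\ge 1-\delta$, so that the intersection automatically has Gaussian (hence Lebesgue) measure $\ge 1-2\delta>0$. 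Alternatively, and more cleanly, one can avoid the covering step entirely by applying Jensen's inequality to the Chapman--Kolmogorov integral against the Gaussian weight,
\begin{equation*}
  \log\Gamma(2,x,v,0,0,0)\ \ge\ \int \log\Gamma(2,x,v,1,\xi,\eta)\,\varphi\dx(\xi,\eta)\ +\ \int \log\Gamma(1,\xi,\eta,0,0,0)\,\varphi\dx(\xi,\eta),
\end{equation*}
and then the $G$ bound (Lemma~\ref{lem:nashG} for the second term and its adjoint version, Corollary~\ref{cor:nashG2}, for the first) gives the near-diagonal lower bound directly. This is what the paper does in Lemma~\ref{lem:neardiag}; your scheme can be repaired, but the Jensen route is shorter.

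Second, and more seriously, the chain. You place $N$ equi-spaced waypoints on a critical kinetic trajectory from Section~\ref{sec:kintraj}, but these trajectories are the wrong tool here. By property~\textbf{(4)} they satisfy $\abs{\gamma_v(r)-v_0}\lesssim r^{1/2}\,(\abs{x_0}+\abs{x_1}+\abs{v_0}+\abs{v_1})$, i.e.\ the velocity has a square-root acceleration near $r=0$. With $\Delta t=1/k$ and equi-spaced $r_j$, the first velocity increment is already of size $(1/k)^{1/2}(\abs{\bar X}+\abs{\bar V})$, which violates the near-diagonal requirement $\abs{v_j-v_{j-1}}\lesssim\rho_0/\sqrt{k}$ whenever $\abs{\bar X}+\abs{\bar V}$ is large --- exactly the regime where the chain is needed. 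Critical trajectories are deliberately built to rush near the endpoint $r=0$ in order to match the $r^{-1/2}$ criticality condition~\textbf{(2c)} needed in the $L^1$-Poincar\'e estimate of Lemma~\ref{lem:L1poin}; that feature is detrimental for a chain of uniformly small hops. The paper instead builds the waypoints by hand: linear interpolation $v_j^{\rm lin}=\frac{j}{k}\bar V$, corrected by a quadratic profile $\phi_j$ with $\phi_0=\phi_k=0$ so that the Riemann sum of velocities equals $\bar X/\Delta t$, and then $x_j$ determined by the discrete transport recursion $x_j-x_{j-1}=\Delta t\,v_{j-1}$. The quadratic correction has increments $\lesssim\abs{\bar X}/k^2$, so choosing $k\gtrsim\abs{\bar X}^2+\abs{\bar V}^2$ ensures the small-increment hypothesis uniformly along the chain. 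You should replace the critical-trajectory chain by such an explicit interpolation; otherwise the uniform per-step constant you need does not hold.

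The remaining pieces of your proposal --- the scaling/Galilean reduction, the adjoint version of the $G$ bound for the second factor, and the bookkeeping $\Gamma\gtrsim (c)^N t^{-2d}$ with $N\asymp$ kinetic distance squared --- agree with the paper's argument.
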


The representation of weak solutions in terms of the fundamental solution
together with the lower and upper bounds readily implies the kinetic Harnack
inequality.  See \cite[Theorem~1.6]{golse_harnack_2019} or
\cite[Theorem~7.5]{dmnz_critical_2025} for precise statements which we do not
make here.

\section{Critical kinetic trajectories}
\label{sec:kintraj}

We recall the notion of kinetic trajectories and the special subclass of
critical ones, which were introduced and developed in~\cite{dmnz_critical_2025}.

\begin{definition}
  \label{def:kintraj}
  Let $(t_0,x_0,v_0)$ and $(t_1,x_1,v_1) \in \R^{1+2d}$ with $t_0 \neq t_1$.

  \noindent
  A family of kinetic trajectories is a map
  \begin{equation*}
    \gamma = \gamma(r) =\gamma(r;(t_0,x_0,v_0),(t_1,x_1,v_1)) = (\gamma_t(r),\gamma_x(r),\gamma_v(r)) \in \R^{1+2d}
  \end{equation*}
  defined for $r \in [0,1]$, that is
  \begin{itemize}
  \item continuous over $r \in [0,1]$ (and in particular bounded),
  \item differentiable over $r \in (0,1)$, 
  \item with endpoints $\gamma(0) = (t_0,x_0,v_0)$ and $\gamma(1) = (t_1,x_1,v_1)$,
  \item satisfying the constraint (kinetic relation)
    $\dot{\gamma}_x(r) = \dot{\gamma}_t(r) \gamma_v(r)$ for $r \in (0,1)$.
  \end{itemize}
  
  \noindent A family of kinetic trajectories is called \emph{critical} if it
  additionally satisfies
  \begin{equation*}
    \det \left( \nabla_{(t_1,x_1,v_1)} \gamma(r;(t_0,x_0,v_0),(t_1,x_1,v_1)) \right)   \sim \abs{ \frac{\gamma_t(r)-t_0}{t_1-t_0} }^{{2+4d}} \quad \text{ as } r \to 0^+
  \end{equation*}
  and
  \begin{equation}
    \label{eq:criticality}
    \left| \dot \gamma_t(r) \right| \left| \left( \left[\nabla_{(x_1,v_1)} \gamma_{x,v}(r;(t_0,x_0,v_0),(t_1,x_1,v_1))\right]^{-1} \right)_{\cdot;2} \right| \sim |\dot{\gamma}_v(r)|  \quad \text{ as } r \to 0^+.
  \end{equation}
  Note that in practice we take $\dot{\gamma}_t$ constant.
\end{definition}

For our purposes, we only need the outcome of the construction given in~\cite{dmnz_critical_2025}: there exists a family of critical kinetic trajectories, explicitly built from forcings with desynchronised logarithmic oscillations, which interpolate between any two endpoints. 
These kinetic trajectories are of the form
\begin{equation*}
	\gamma(r)
  	= \begin{pmatrix}
    \gamma_t(r) \\
    \gamma_x(r) \\
    \gamma_v(r)
  	\end{pmatrix}
  	:= \begin{pmatrix}
    	t_0+ (t_1-t_0)r \\
    	\A_{t_1-t_0}(r) \begin{pmatrix} x_1 \\ v_1
   		\end{pmatrix} 
      	+ \B_{t_1-t_0}(r) \begin{pmatrix} x_0 \\ v_0 \end{pmatrix} 
    \end{pmatrix}, 
\end{equation*}
with matrices $\A_{t_1-t_0},\B_{t_1-t_0} \colon [0,1] \to \R^{2d \times 2d}$, and
satisfy the following properties:
\begin{enumerate}[itemsep=0.2cm]
  \item[\hypertarget{link:1}{\textbf{(1)}}] The map $\gamma$ forms a family of
        critical kinetic trajectories in the sense of Definition
        \ref{def:kintraj} with constant \(\dot \gamma_t = t_1-t_0\).
\item[\hypertarget{link:2}{\textbf{(2)}}] $\A_{t_1-t_0} \colon [0,1] \to \R^{2d \times 2d}$ satisfies
  \begin{enumerate}[topsep=0.5em,itemsep=0.2cm]
  \item[\hypertarget{link:2a}{\textbf{(a)}}] $\A_{t_1-t_0}(0) = 0$, $\A_{t_1-t_0}(1) = \id_{2d}$,
  \item[\hypertarget{link:2b}{\textbf{(b)}}] $\det \A_{t_1-t_0}(r) = r^{2d}$,
  \item[\hypertarget{link:2c}{\textbf{(c)}}] $\abs{(\A_{t_1-t_0}(r)^{-1})_{i;2}} \lesssim (1+\abs{t_1-t_0}) r^{-\frac{1}{2}}$ for $i=1,2$ and $r \in (0,1]$,
  \end{enumerate}
\item[\hypertarget{link:3}{\textbf{(3)}}] $\B_{t_1-t_0} \colon [0,1] \to \R^{2d \times 2d}$ satisfies
  \begin{enumerate}[topsep=0.5em,itemsep=0.2cm]
  \item[\hypertarget{link:3a}{\textbf{(a)}}] $\B_{t_1-t_0}(0) = \id_{2d}$, $\B_{t_1-t_0}(1) = 0$,
  \item[\hypertarget{link:3b}{\textbf{(b)}}] $\det \B_{t_1-t_0}(r) \gtrsim 1$ near $r = 0$.
  \end{enumerate}
\item[\hypertarget{link:4}{\textbf{(4)}}] The trajectory satisfies the following bounds for some universal constants 
  \begin{equation*}
  \begin{cases}
    \abs{\gamma_x(r)-x_0-r(t_1-t_0)v_0} \lesssim   \left(\abs{x_0}+\abs{x_1}\right) r^{\frac32}+ \abs{t_1-t_0} r^{\frac32} \left( \abs{v_0}+\abs{v_1} \right), \\[2mm]
    \abs{\gamma_v(r)-v_0} \lesssim  \abs{t_1-t_0}^{-1} \left( \abs{x_0}+\abs{x_1}\right)r^{\frac12}+ \left( \abs{v_0}+\abs{v_1} \right)r^{\frac12}  , \\[2mm]
    \abs{\dot \gamma_v(r)} \lesssim  \abs{t_1-t_0}^{-1}  \left(\abs{x_0}+\abs{x_1}\right)r^{-\frac{1}{2}}+  \left( \abs{v_0}+\abs{v_1} \right)r^{-\frac12}.
  \end{cases}
\end{equation*}
\end{enumerate}

These features are exactly those needed for our analysis, in particular the criticality condition~\textbf{(2c)}.

\section{Nash's $G$ bound for the Kolmogorov equation}
\label{sec:nashG}

To prove Nash's $G$ bound, we need first a weak $\L^1$ estimate for the
logarithm of weak supersolutions, which is reminiscent of the one of Moser, but
where the comparison is with a logarithmic mean on the full space with Gaussian
weight.  The estimate is reminiscent of \cite[Theorem 6.1]{dmnz_critical_2025},
and we follow the proof closely.

\begin{lemma} \label{lem:L1poin}
	Let $E \subset \R^{2d}$ be a bounded set. Then, for any positive weak supersolution $f >0$ of the Kolmogorov equation \eqref{eq:int:kol} in $ \left( \frac{1}{4},1\right) \times \R^{2d}$, we have 
	\begin{equation*}
		s\abs{\left\{ (t,x,v) \in \left(\frac{1}{4},\frac{3}{4}\right)  \times E :   \log f(t,x,v) - c(f)   > s \right\} } \le C \left( \frac{1}{\lambda}+\Lambda \right)
	\end{equation*}
	for all $s>0$ with
	\begin{equation*}
		c(f) = \int_{\R^{2d}} \log f(1,y,w)\, \ee^{- \pi (\abs{y}^2+\abs{w}^2)} \dx (y,w)
	\end{equation*}
	and some universal constant $C = C(d,E)>0$. 
\end{lemma}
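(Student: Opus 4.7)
The approach is the kinetic analog of Moser's log-estimate, with the local Moser mean replaced by the Gaussian mean $c(f)$ at time $t = 1$ and with transport of information realised by critical kinetic trajectories rather than by a spatial Gaussian Poincar\'e inequality as in \cite{nash_continuity_1958,MR855753}. First, since $f > 0$ is a weak supersolution of \eqref{eq:int:kol}, a renormalisation argument combined with \eqref{eq:int:H1} yields the distributional inequality
\begin{equation*}
	(\partial_t + v \cdot \nabla_x) u - \nabla_v \cdot (\fra \nabla_v u) \ge \lambda \abs{\nabla_v u}^2
\end{equation*}
on $\left(\tfrac14, 1\right) \times \R^{2d}$ for $u := \log f$. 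This quadratic source is the engine of the whole estimate.

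Second, for each $(t, x, v) \in \left(\tfrac14, \tfrac34\right) \times E$ and $(y, w) \in \R^{2d}$, let $\gamma(r) := \gamma(r; (t, x, v), (1, y, w))$ be the critical kinetic trajectory of Section~\ref{sec:kintraj}, so that $\dot\gamma_t = 1 - t \in \left(\tfrac14, \tfrac34\right)$ is constant in $r$. The kinetic relation $\dot\gamma_x = \dot\gamma_t \gamma_v$ and the chain rule (justified via mollification) yield
\begin{equation*}
	u(1, y, w) - u(t, x, v) = \int_0^1 \bigl[ \dot\gamma_t (\partial_t + v \cdot \nabla_x) u + \dot\gamma_v \cdot \nabla_v u \bigr](\gamma(r)) \dx r.
\end{equation*}
Substituting the differential inequality (noting $\dot\gamma_t > 0$) and rearranging yields a pointwise bound of the form
\begin{equation*}
	u(t, x, v) - u(1, y, w) \le - \int_0^1 \dot\gamma_t \nabla_v \cdot (\fra \nabla_v u)(\gamma) \dx r - \lambda \int_0^1 \dot\gamma_t \abs{\nabla_v u}^2(\gamma) \dx r - \int_0^1 \dot\gamma_v \cdot \nabla_v u(\gamma) \dx r.
\end{equation*}

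Third, I multiply by the Gaussian weight $\ee^{-\pi(\abs y^2 + \abs w^2)}$ and integrate in $(y, w) \in \R^{2d}$; the left-hand side becomes exactly $u(t, x, v) - c(f)$. For the divergence term, the plan is to perform the change of variables $(y, w) \mapsto (\gamma_x(r), \gamma_v(r))$ (Jacobian $r^{2d}$ by property \textbf{(2b)}) and then integrate by parts in the new velocity variable; the derivative then lands on the Gaussian composed with $\A_{1-t}(r)^{-1}$, whose relevant column block is bounded by $r^{-1/2}$ via the criticality condition \textbf{(2c)}. Two Young inequalities --- one for the resulting divergence term, absorbing the $\fra$-energy against a portion of the good term and producing a prefactor of order $\Lambda$, and one for the cross term $\dot\gamma_v \cdot \nabla_v u$, producing a prefactor of order $1/\lambda$ --- combined with the Gaussian decay in $(y, w)$ and the trajectory bounds \textbf{(4)}, should yield a pointwise upper bound for $u(t, x, v) - c(f)$ whose $L^1$ norm over $\left(\tfrac14, \tfrac34\right) \times E$ is controlled by $C(d, E)(1/\lambda + \Lambda)$; the weak $L^1$ estimate of the lemma then follows by Chebyshev. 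The main obstacle --- and precisely the feature for which the critical trajectories are engineered --- is that the naive $r^{-1}$ singularity of $\abs{\dot\gamma_v}^2/\dot\gamma_t$ near $r = 0$ would otherwise make the Young cost non-integrable in $r$, so the two Young weightings must be coupled in a manner compatible with the criticality scaling \textbf{(2c)} to ensure convergence of all $r$-integrals. This is where the proof follows Theorem~6.1 of \cite{dmnz_critical_2025} most closely.
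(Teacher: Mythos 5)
Your proposal follows the same overall route as the paper: log-transform to obtain the quadratic source, integrate along the critical kinetic trajectories of Section~\ref{sec:kintraj}, change variables $(y,w)\mapsto(\gamma_x(r),\gamma_v(r))$ to integrate by parts the divergence term, use the criticality bound \textbf{(2c)} to control the column of $\A_{1-t}(r)^{-1}$ by $r^{-1/2}$, and pass from an $\L^1$ bound on $(\log f - c(f))_+$ to the weak $\L^1$ estimate by Markov. One cosmetic difference: the paper retains the quadratic source in the $\fra$-norm, $\langle\fra\nabla_v\log f,\nabla_v\log f\rangle$, rather than replacing it by $\lambda|\nabla_v\log f|^2$ as you do. Keeping $\abs{\cdot}_\fra$ is worthwhile, because after integration by parts the dangerous term is $\langle\fra\nabla_v\log f,\cdot\rangle$, and bounding it by Cauchy--Schwarz in the $\fra$-inner product together with \eqref{eq:int:H2} produces the clean prefactor $\Lambda$; your route pays an extra factor of $\Lambda/\lambda$.

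The real gap is precisely the one you flag yourself but do not close. You observe that a naive pointwise Young inequality produces a cost $\sim M^2/r$ that is not integrable near $r=0$, and suggest that ``the two Young weightings must be coupled'' with the criticality scaling. That is not how the singularity is removed, and it is hard to see how any $r$-dependent Young splitting could help: the good term $\abs{\nabla_v\log f}_\fra^2$ has a fixed constant coefficient, so the absorption parameter cannot grow as $r\to 0$, and the cost floor of $M^2/r$ remains. The actual resolution is different in kind: after integrating over $(t,x,v)\in(\tfrac14,\tfrac34)\times E$ (this integration is essential, not just a final $\L^1$ step), one substitutes $\tilde t=\gamma_t(r)$ and $(\tilde x,\tilde v)=\gamma_{x,v}(r)$ using \textbf{(3)}, so that the integrand $p:=\abs{\nabla_v\log f}_\fra(\tilde t,\tilde x,\tilde v)\,\varphi^{1/2}(y,w)$ becomes \emph{independent of $r$}. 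Then the $r$-integral of the positive part can be evaluated explicitly,
\begin{equation*}
  \int_0^{r_0}\bigl(Mr^{-1/2}p-p^2\bigr)_+\dx r \;=\; 2\theta^{1/2}Mp-\theta p^2 \;\le\; M^2,\qquad \theta:=\min\{r_0,M^2/p^2\},
\end{equation*}
uniformly in $p$ --- no Young inequality, no $r^{-1}$, just the observation that the positive part of $Mr^{-1/2}p-p^2$ vanishes for $r>M^2/p^2$. Young's inequality is used only for $r\in(r_0,1)$, where $r^{-1}$ is harmless. Without this Fubini-plus-substitution step the argument does not close, so your plan of first deriving a pointwise-in-$(t,x,v)$ bound and only then computing its $\L^1$ norm is not the right order of operations.
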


\begin{proof}
	We may assume that $f \ge \epsilon $ for some $\epsilon>0$ and let $\epsilon \to 0^+$ in the end. 
	In view of \cite[Appendix A]{dmnz_critical_2025}, we may argue by formal calculations.

	The fact that $f$ is a positive supersolution to \eqref{eq:int:kol} implies that $g = \log f$ is a supersolution to the Kolmogorov equation with a good quadratic nonlinearity
	\begin{equation}\label{eq:evolution-log}
		(\partial_t + v \cdot \nabla_x) \log f  \ge  \nabla_v \cdot (\fra  \nabla_v \log f) + \langle \fra  \nabla_v \log f , \nabla_v \log f\rangle,
	\end{equation}
	which we can use to absorb any appearing gradient term.
	
	Moreover, we introduce
	\begin{equation*}
		\varphi(y,w) := \ee^{- \frac{\pi}{2} (\abs{y}^2+\abs{w}^2)}
	\end{equation*}
	and note that we have $\abs{\nabla \varphi(y,w)} \lesssim \ee^{- \frac{\pi}{4} (\abs{y}^2+\abs{w}^2)} = \varphi(y,w)^{\frac{1}{2}} \le 1$.

	We prove an $\L^1$ estimate on the positive part of $\log f(t,x,v) - c(f)$ for $(t,x,v) \in (\frac{1}{4},\frac{3}{4}) \times E$. 
	Given $(y,w) \in \R^{2d}$ we choose a critical kinetic trajectory $\gamma$ connecting $(t,x,v)$ with $(1,y,w)$ as in Section~\ref{sec:kintraj}. 
	
	\medskip
	
	 We write
	\begin{align*}
		I&:=\log f(t,x,v) - \int_{\R^{2d}} \log f(1,y,w) \ee^{- \pi (\abs{y}^2+\abs{w}^2)} \dx (y,w)\\
		&=  \int_{\R^{2d}} \Big( \log f(t,x,v) - \log f(1,y,w) \Big) \varphi^2(y,w) \dx (y,w) \\
		&= -  \int_{\R^{2d}} \int_0^{1}\frac{\dx}{\dx r}\log f(\gamma(r))\dx r \ \varphi^2(y,w) \dx (y,w) \\
		&= -  \int_{\R^{2d}}\int_0^{1} \Big( \dot{\gamma}_t(r) [(\partial_t +v\cdot \nabla_x) \log f](\gamma(r)) + \dot{\gamma}_v(r) \cdot [\nabla_v \log f](\gamma(r)) \dx r \Big) \varphi^2(y,w) \dx (y,w)\\
		&\le  - (1-t) \int_{\R^{2d}}\int_0^{1}  [\nabla_v \cdot (\fra \nabla_v \log f)](\gamma(r))  \dx r \ \varphi^2(y,w)   \dx (y,w)\\
		&\hphantom{\le}\,  - (1-t) \int_{\R^{2d}}\int_0^{1}  \langle \fra \nabla_v \log f, \nabla_v \log f \rangle(\gamma(r))  \dx r \ \varphi^2(y,w)  \dx (y,w)\\
		&\hphantom{\le}\,  -  \int_{\R^{2d}}\int_0^{1} \dot{\gamma}_v(r) \cdot [\nabla_v \log f](\gamma(r)) \dx r \ \varphi^2(y,w) \dx (y,w),
	\end{align*}
	where we used the supersolution property~\eqref{eq:evolution-log} in the
    last inequality.

	Next, we employ the change of variables
	\begin{equation*}
		\begin{pmatrix}
			\tilde{y} \\\tilde{w}
		\end{pmatrix} = \Phi(y,w) = \Phi_{r,t,x,v}(y,w) = \begin{pmatrix}
			\gamma_x(r) \\
			\gamma_v(r)
		\end{pmatrix}  = \A_{1-t}(r) \begin{pmatrix}
		y \\ w
		\end{pmatrix} + b(r,t,x,v),
	\end{equation*}
	where $b \colon [0,1] \times \left( \frac{1}{4},\frac{3}{4} \right) \times \R^{2d} \to \R^{2d}$. 
	Next, we do an integration by parts of the first term in the above inequality with respect to $\tilde{w}$. Fix $r \in (0,1)$, then 
	\begin{align*}
		&-\int_{\R^{2d}} [\nabla_v \cdot (\fra \nabla_v \log f)](\gamma(r)) \varphi^2(y,w) \dx (y,w) \\
		&= -\int_{\R^{2d}} [\nabla_v \cdot (\fra \nabla_v \log f)](\gamma_t(r),\tilde{y},\tilde{w})    \varphi^2(\Phi^{-1}(\tilde{y},\tilde{w})) \abs{\det{\A_{1-t}(r)}}^{-1} \dx (\tilde{y},\tilde{w})  \\
		&=  \int_{\R^{2d}} \left\langle [\fra \nabla_v \log f](\gamma_t(r),\tilde{y},\tilde{w}), \nabla_{\tilde{w}}[ \varphi^2(\Phi^{-1}(\tilde{y},\tilde{w}))] \right\rangle \abs{\det{\A_{1-t}(r)}}^{-1} \dx (\tilde{y},\tilde{w}) \\
		&=  2\int_{\R^{2d}} \left\langle[\fra \nabla_v \log f](\gamma_t(r),\tilde{y},\tilde{w}),[\nabla \varphi]^T(\Phi^{-1}(\tilde{y},\tilde{w}))(\A_{1-t}(r)^{-1})_{\cdot; 2}\right\rangle \varphi(\Phi^{-1}(\tilde{y},\tilde{w})) \\
		&\hspace{10.8cm}\cdot \abs{\det{\A_{1-t}(r)}}^{-1} \dx (\tilde{y},\tilde{w}) \\
		&=  2\int_{\R^{2d}} \left\langle[\fra \nabla_v \log f](\gamma(r)),[\nabla \varphi]^T(y,w)(\A_{1-t}(r)^{-1})_{\cdot; 2}\right\rangle \varphi(y,w) \dx (y,w) \\
		&\le 2c_{2c} r^{-\frac{1}{2}}\sqrt{\Lambda} \int_{\R^{2d}} \abs{\nabla_v \log f}_\fra(\gamma(r))\,  \varphi^{\frac{3}{2}}(y,w) \dx (y,w),
	\end{align*}
	where we write $\abs{\cdot}_\fra^2 = \langle \fra \cdot, \cdot \rangle$.  The constant $c_{2c}$ is
    such that $\abs{(\A_{1-t}(r)^{-1})_{\cdot; 2}} \le c_{2c}r^{-\frac{1}{2}}$ for all
    $r \in (0,1]$, see Section~\ref{sec:kintraj} property
    \hyperlink{link:2}{\textbf{(2)}} \hyperlink{link:2c}{\textbf{(c)}}.  Note
    that the boundary term in the integration by parts vanishes due to the decay
    of $\varphi$.

	From here, we obtain for the positive part
	\begin{align*}
		&I_+\le  \frac{1}{2} \int_0^1 \int_{\R^{2d}} \left(\frac{4c_{2c} \sqrt{\Lambda}}{r^\frac{1}{2}} \abs{\nabla_v \log f}_\fra(\gamma(r)) \varphi^{\frac{3}{2}}(y,w)-   \abs{\nabla_v \log f}_\fra^2(\gamma(r))  \varphi^2(y,w) \right)_+ \hspace{-0.3cm} \dx (y,w) \dx r \\ 
		&\hphantom{\le} +\frac{1}{2} \int_0^1\int_{\R^{2d}} \left( \frac{8c_{4}c_5}{\sqrt{\lambda}\,r^{\frac{1}{2} }}\abs{\nabla_v \log f}_\fra(\gamma(r))  \varphi^{\frac{3}{2}}(y,w) -  \abs{\nabla_v \log f}^2_\fra(\gamma(r)) \varphi^2(y,w) \right)_+ \hspace{-0.3cm}\dx (y,w) \dx r.
	\end{align*}
	Here, $c_{4}>0$ is such that $\abs{\dot{\gamma}_v(r)} \le c_{4} (\abs{y}+\abs{w}) r^{-\frac{1}{2}}$
    for all $r \in (0,1]$, see Section~\ref{sec:kintraj} property
    \hyperlink{link:4}{\textbf{(4)}} and we used that
    $\abs{\dot{\gamma}_t} \ge \frac{1}{4}$ uniformly for any choice
    $t \in \left(\frac{1}{4},\frac{3}{4}\right)$.  Moreover, we estimated
    $(\abs{y}+\abs{w})\varphi^2 \le c_5 \varphi^{\frac{3}{2}}$ for some $c_0>0$.
 
	To conclude the proof of the desired estimate, we integrate on $\left(\frac{1}{4},\frac{3}{4}\right) \times E$ and then estimate two terms of the form 
	\begin{align*}
		&\int_{\frac14}^{\frac34} \int_E \int_0^1 \int_{\R^{2d}} \left(Mr^{-\frac{1}{2}}\abs{\nabla_v \log f}_\fra(\gamma(r)) \varphi^\frac{1}{2}(y,w)-   \abs{\nabla_v \log f}_\fra^2(\gamma(r)) \varphi(y,w)\right)_+ \\
		&\hspace{11cm} \varphi(y,w) \dx (y,w) \dx r \dx(x,v) \dx t
	\end{align*}
	for $M \in \left\{4c_{2c}  \sqrt{\Lambda},\frac{8c_{4}}{\sqrt{\lambda} } \right\}$.

	Next, we split up the $r$-integral. 
	First, we treat the case of small $r \in (0,r_0)$ with $r_0 \in (0,1)$ given by property \hyperlink{link:3}{\textbf{(3)}} \hyperlink{link:3b}{\textbf{(b)}} in Section~\ref{sec:kintraj}. 
	We substitute $\tilde{t} = \gamma_t(r) = t+r(1-t)$ and then 
	\begin{equation*}
		(\tilde{x},\tilde{v}) = \Psi(x,v) = \Psi_{r,\tilde{t},y,w}(x,v) = \begin{pmatrix}
		\gamma_x(r) \\
		\gamma_v(r)
		\end{pmatrix}  = \B(r) \begin{pmatrix}
		x \\ v
		\end{pmatrix} + b(r,\tilde{t},y,w)
	\end{equation*}
	for some $\R^{2d}$-valued function $b$ with the noted dependencies. 
	Note that $\Psi(E) \subset \tilde{B} := B_{c(\abs{y}+\abs{w})}(0)$ for some constant $c>0$ by property \hyperlink{link:4}{\textbf{(4)}}.

	The change of variables together with Fubini gives
	\begin{align*}
		&\int_{\frac14}^{\frac34} \int_E \int_0^{r_0} \int_{\R^{2d}} \left(Mr^{-\frac{1}{2}}\abs{\nabla_v \log f}_\fra(\gamma(r))\varphi^\frac{1}{2}(y,w)-   \abs{\nabla_v \log f}_\fra^2(\gamma(r)) \varphi(y,w)\right)_+   \\
		& \hspace{9cm} \varphi(y,w) \dx (y,w) \dx r \dx(x,v) \dx t  \\
		&\le  \int_0^{r_0} \int_{\R^{2d}} \int_{\frac{1}{4}+\frac{3}{4}r}^{\frac{3}{4}+\frac{1}{4}r} \int_{\Psi(E)} \left(Mr^{-\frac{1}{2}}\abs{\nabla_v \log f}_\fra(\tilde{t},\tilde{x},\tilde{v}) \varphi^{\frac{1}{2}}(y,w)-   \abs{\nabla_v \log f}_\fra^2(\tilde{t},\tilde{x},\tilde{v}) \varphi(y,w)\right)_+ \\
		& \hspace{7cm} \frac{1}{1-r} \abs{\det \B(r)}^{-1} \dx(\tilde{x},\tilde{v}) \dx \tilde{t} \,  \varphi(y,w)\dx (y,w) \dx r   \\
		&\le \int_{\R^{2d}} \int_{\frac{1}{4}}^{1} \int_{\tilde{B}} \int_0^{r_0} \left(Mr^{-\frac{1}{2}}\abs{\nabla_v \log f}_\fra(\tilde{t},\tilde{x},\tilde{v}) \varphi^\frac{1}{2}(y,w)-   \abs{\nabla_v \log f}_\fra^2(\tilde{t},\tilde{x},\tilde{v}) \varphi(y,w)\right)_+ \\
		& \hspace{7cm} \sup_{s \in (0,r_0)} \frac{\abs{\det \B(s)}^{-1}}{1-s}  \dx r\dx(\tilde{x},\tilde{v}) \dx \tilde{t} \,  \varphi(y,w)\dx (y,w) .
	\end{align*}
	We introduce $p := p(\tilde{t},\tilde{x},\tilde{v},y,w) := \abs{\nabla_v \log f}_\fra(\tilde{t},\tilde{x},\tilde{v}) \varphi^{\frac12}(y,w)$, which does not depend on $r$. 
	With this notation, we can estimate the inner integral as
	\begin{equation*}
		\int_{0}^{r_0}  \left(  Mr^{-\frac{1}{2}} p-p^2 \right)_+ \dx r  = 2\theta^{\frac{1}{2}}Mp-\theta p^2 \le M^2\le C(d)\left( \frac{1}{\lambda}+\Lambda \right),
	\end{equation*}
	where $\theta = \min \{r_0,M^2/p^2 \}$. 
	We have proven the universal bound for the first term as the $\varphi$ weight allows us to integrate the indicator function of $\tilde{B}$.

	For the integral for large $r \in (r_0,1)$, we employ Young's inequality as
	\begin{align*}
		&\int_{\frac{1}{4}}^{\frac{3}{4}} \int_{E} \int_{r_0}^1 \int_{\R^{2d}} \left(Mr^{-\frac{1}{2}}\abs{\nabla_v \log f}_\fra(\gamma(r)) \varphi^\frac{1}{2}(y,w)-   \abs{\nabla_v \log f}_\fra^2(\gamma(r)) \varphi(y,w)\right)_+ \\
		&\hspace{11cm} \varphi \dx (y,w) \dx r \dx(x,v) \dx t \\
		&\le \int_{\frac{1}{4}}^{\frac{3}{4}} \int_{E} \int_{r_0}^1 \int_{\R^{2d}} \frac{1}{4}M^2r^{-1} \varphi  \dx (y,w) \dx r \dx(x,v) \dx t \\
      &\le C(d,\abs{E})M^2 \le C(d,\abs{E})\left( \frac{1}{\lambda}+\Lambda \right).\qedhere
	\end{align*}
\end{proof}

We now prove Nash's $G$ bound.

\begin{proof}[Proof of Lemma \ref{lem:nashG}]
	Fix $(x,v) \in \R^{2d}$ with $\lvert(x,v)\rvert\le 1$ and set
	\[
	f(t,y,w):=\Gamma(t,y,w,0,x,v),\qquad t>0, \, (y,w) \in \R^{2d}.
	\]
	Then, $f$ is a nonnegative weak solution of the Kolmogorov equation
	\[
		(\partial_t+w\cdot\nabla_y)f-\nabla_w\cdot(\mathfrak a\,\nabla_w f)=0,
	\]
	on $(\frac{1}{4},1)\times\R^{2d}$. 

	Furthermore, we recall that the mass conservation   
	\begin{equation} \label{eq:unitmass}
		\int_{\R^{2d}} f(t,y,w) \dx(y,w) = 1
	\end{equation}
	for all $t>0$ was proven in \cite[Theorem 4.14~(ii)]{auscher_fundamental_2024}. 
	We may replace $f$ by $f+\epsilon$ to ensure positivity and let $\epsilon \to 0$ in the end. 
	In particular, we may apply Lemma~\ref{lem:L1poin} on $(\frac14,1)\times\R^{2d}$.

 	We use the upper bound \eqref{eq:upper} and deduce that
	\begin{equation*}
		M(t) = \int_{\R^{2d}} \abs{(y,w)} f(t,y,w) \dx (y,w) \le C_0
	\end{equation*}
	for all $t \in [\frac{1}{4},\frac{3}{4}]$ and some dimensional constants $c_0,C_0$ depending also on the ellipticity constants. We deduce
	\begin{equation*}
		\int_{\abs{(y,w)} \ge 2C_0} f(t,y,w) \dx (y,w) \le \frac{1}{2C_0} \int_{\abs{(y,w)} \ge 2C_0} \abs{(y,w)} f(t,y,w) \dx (y,w) \le \frac{1}{2}.
	\end{equation*}
	
	From \eqref{eq:unitmass} we deduce
	\begin{equation*}
		\int_{\abs{(y,w)} \le 2C_0} f(t,y,w) \dx(y,w)\ge \frac{1}{2}
	\end{equation*}
	for all $t \in [\frac{1}{4},\frac{3}{4}]$. 
	In particular, 
	\begin{equation*}
		\int_{\frac{1}{4}}^{\frac{3}{4}}\int_{\abs{(y,w)} \le 2C_0} f(t,y,w) \dx(y,w)\ge \frac{1}{4}
	\end{equation*}
	
	The upper bound \eqref{eq:upper} also gives an $\L^\infty$ bound on $Q:=\left[\frac{1}{4},\frac{3}{4}\right] \times B_{2C_0}(0) $, i.e.\ $\abs{f(t,y,w)} \le C_1$ for some $C_1=C_1(d,\lambda,\Lambda)$ and any $(t,y,w) \in Q$. Let $\eta>0$, and $\Omega = \{ (t,y,w) \in Q : f(t,y,w) > \eta \}$, then
	\begin{align*}
		\frac{1}{4} &\le \int_{Q} f(t,y,w) \dx (t,y,w) = \int_{\Omega \cap Q} f(t,y,w) \dx (t,y,w) + \int_{\Omega^c \cap Q} f(t,y,w) \dx (t,y,w) \\
		&\le C_1 \abs{\Omega} + \eta \abs{Q}.
	\end{align*}
	For $\eta$ small enough, we can absorb the last term and obtain $\abs{\Omega} \ge C_2$ for some constant $C_2$ depending on all the other previous constants.
	
	Now, we apply Lemma~\ref{lem:L1poin}, the $\L^1$ estimate for the logarithmic level sets on $Q$ (note $\Omega \subset Q$ and that $Q$ is strictly separated from $\{ 1\} \times \R^{d}$) to obtain
	\begin{equation} \label{eq:par:loglevel}
		\abs{\{ (t,y,w) \in \Omega : \log f(t,y,w) -G(1) > s \} }\le C_3s^{-1}, \; s>0,
	\end{equation}
	where 
	\begin{equation*}
		G(1) = \int_{\R^{2d}} \log f(1,y,w) \ee^{-\pi (\abs{y}^2+\abs{w}^2)} \dx (y,w)
	\end{equation*}
	and $C_3>0$ is a universal constant depending on $d,\lambda, \Lambda$.
	
	For a constant $S$ large enough, we thus obtain that $\abs{\Omega_S}  \ge \frac{\abs{\Omega}}{2}$, where 
	\begin{equation*}
		\Omega_S:=\{ (t,y,w) \in \Omega : \log f(t,y,w) -G(1) \le S \}.
	\end{equation*}
    Together with the good bound, we deduce
	\begin{align*}
		\frac{\abs{\Omega}}{2} \log \eta &\le  \abs{\Omega_S} \log \eta \le \int_{\Omega_S}\log \eta \dx(y,w)  \le   \int_{\Omega_S} \log f (t,y,w)   \dx (y,w) \\
		&\le   \int_{\Omega_S} \big(G(1)+S\big) \dx (y,w) \le (G(1)+S)\abs{\Omega_S}
	\end{align*}
	from which we deduce Nash's $G$ bound as
	\begin{equation*}
		G(1) \ge \frac{\abs{\Omega}}{2\abs{\Omega_S}}\log \eta - S.
        \qedhere
	\end{equation*} 
\end{proof}

\begin{remark}
  This also makes precise the analogy of Moser's weak $\L^1$ estimate for the logarithm of supersolutions and Nash's $G$ bound.  
  Moser's estimate, together with a bound on the first moments and the $\L^\infty$ bound, yields Nash's $G$ bound.  
  We emphasise, however, that the original formulation of Moser's estimate is written for a weight $\varphi$ which has compact support.  
  
  The good set, i.e.\ a set of positive measure where the function is bounded away from zero, is also key in De~Giorgi's approach.  In Moser's argumentation, this is somewhat hidden in the lemma of Bombieri--Giusti
  \cite{dmnz_critical_2025}.
\end{remark}

\begin{remark}
	One could prove the moment bound also directly by an application of the kinetic Nash inequality. We refer to \cite{dmnz_critical_2025} and \cite{hlw_nash_2024}.
\end{remark}

We need the following similar statement, where the integration is in the last variables of $\Gamma$. 
This is immediate in the parabolic setting due to symmetry, but slightly more complicated in the kinetic setting. 

\begin{coro} \label{cor:nashG2}
	There exists a constant $C = C(d,\lambda,\Lambda) < \infty$ such that for all $\abs{(x,v)} \le 1$ we have
	\begin{equation} \label{eq:nashG2} 
		\int_{\R^{2d}} \ee^{-\pi(\abs{y}^2+\abs{w}^2)} \log \Gamma(2,x,v,1,y,w) \dx (y,w) \ge - C.
	\end{equation}
\end{coro}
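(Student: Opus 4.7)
The plan is to reduce the statement to Lemma~\ref{lem:nashG} by exploiting the duality structure of the Kolmogorov equation. Unlike the parabolic case, the kernel $\Gamma(t,x,v,s,y,w)$ satisfies the forward equation in its first three arguments but the formal adjoint equation in the last three; concretely, the adjoint of $L = \partial_t + v \cdot \nabla_x - \nabla_v \cdot (\fra \nabla_v)$ is $L^* = -\partial_t - v \cdot \nabla_x - \nabla_v \cdot (\fra^T \nabla_v)$. The observation driving the proof is that after the elementary change of variables $\sigma = 2-s$, $z = y$, $u = -w$, the adjoint operator $L^*$ (acting in the backward variables) becomes again a forward Kolmogorov operator
\begin{equation*}
	\partial_\sigma + u \cdot \nabla_z - \nabla_u \cdot \bigl(\tilde{\fra}\, \nabla_u\bigr),
	\qquad \tilde{\fra}(\sigma, z, u) := \fra^T(2-\sigma, z, -u),
\end{equation*}
and one checks that $\tilde{\fra}$ satisfies hypotheses \eqref{eq:int:H1}--\eqref{eq:int:H2} with constants depending only on $\lambda$ and $\Lambda$ (the symmetric part is preserved, and $|\fra^T| = |\fra| \lesssim \Lambda$ under~\eqref{eq:int:H2}).

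Let $\tilde{\Gamma}$ denote the fundamental solution of this transformed Kolmogorov equation, whose existence is ensured by \cite{auscher_fundamental_2024}. Setting $h(\sigma, z, u) := \Gamma(2, x, v, 2-\sigma, z, -u)$, a direct calculation from $L^*_{(s,y,w)} \Gamma = 0$ shows $h$ solves the transformed equation on $\sigma \in (0,2)$, with initial trace $h(0,\cdot,\cdot) = \delta_{(x, -v)}$. By uniqueness of the fundamental solution,
\begin{equation*}
	\tilde{\Gamma}(\sigma, z, u, 0, x, -v) = \Gamma(2, x, v, 2-\sigma, z, -u),
\end{equation*}
and specialising to $\sigma = 1$ gives $\tilde{\Gamma}(1, z, u, 0, x, -v) = \Gamma(2, x, v, 1, z, -u)$.

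Since $|(x, -v)| = |(x, v)| \le 1$, Lemma~\ref{lem:nashG} applied to $\tilde{\Gamma}$ (whose underlying equation has ellipticity constants controlled by $\lambda,\Lambda$, and whose associated upper bound \eqref{eq:upper} and mass conservation transfer from the general theory) yields a constant $\tilde C = \tilde C(d,\lambda,\Lambda)$ such that
\begin{equation*}
	\int_{\R^{2d}} \ee^{-\pi(|z|^2+|u|^2)} \log \tilde{\Gamma}(1, z, u, 0, x, -v) \, \dx(z,u) \ge -\tilde C.
\end{equation*}
Substituting the identity and performing the change of variables $u \mapsto -w$ (under which the Gaussian weight is invariant) gives exactly \eqref{eq:nashG2}.

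The only nontrivial point is verifying the symmetry properly: checking the signs so that $\tilde{\Gamma}$ really is the fundamental solution of a bona fide forward Kolmogorov equation with ellipticity controlled by $\lambda,\Lambda$, and checking that Lemma~\ref{lem:nashG} (which was stated for the specific equation \eqref{eq:int:kol}) applies equally to any such equation — as its proof only used \eqref{eq:int:H1}--\eqref{eq:int:H2}, the upper bound \eqref{eq:upper}, and mass conservation, all of which hold for $\tilde{\Gamma}$.
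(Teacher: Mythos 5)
Your proof is correct and takes essentially the same approach as the paper: both reduce to Lemma~\ref{lem:nashG} by noting that $(s,y,w)\mapsto\Gamma(t,x,v,s,y,w)$ solves the adjoint problem, then transforming it into a forward Kolmogorov equation via time reversal combined with a sign flip, and invoking uniqueness of the fundamental solution from \cite{auscher_fundamental_2024}. The only inessential difference is that you flip the velocity variable $w\mapsto -w$, while the paper flips the position variable, both of which restore the transport sign and leave the Gaussian weight invariant.
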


\begin{proof}
	Let $t>0$ and $(x,v) \in \R^{2d}$. We consider the shifted diffusion coefficient $\fra_t(\cdot,\cdot) = \fra(t-\cdot,\cdot)$ and the fundamental solution $\Gamma^\sharp_{\fra_t}(\sigma,y,w,0,x,v)$ to the Kolmogorov equation 
	\begin{equation*}
		(\partial_\sigma - y \cdot \nabla_w) f-\nabla_w \cdot (\fra_t \nabla_w f) = 0. 
	\end{equation*}
	(This equation is equivalent to \eqref{eq:int:kol} via a coordinate change
    in the position variable.) Using the defining properties and uniqueness of
    the fundamental solution, \cite[Definition~3.3 and
    Lemma~3.4]{auscher_fundamental_2024}, and the observation that
    $(s,y,w) \mapsto \Gamma(t,x,v,s,y,w)$ is the fundamental solution to the adjoint
    problem (\cite[Theorem~4.14~(ii)]{auscher_fundamental_2024}) we deduce
    \begin{equation*}
		\Gamma(t,x,v,t-\sigma,y,w) = \Gamma^\sharp_{\fra_t} (\sigma,y,w,0,x,v).
	\end{equation*}
	Setting $t = 2$, $\sigma = 1$, and applying Lemma~\ref{lem:nashG} to $\Gamma^\sharp_{\fra_t}$ (note that the universal constant depends only on the ellipticity bounds and dimension) gives the desired bound.
\end{proof}

\section{The lower bound on the fundamental solution}
\label{sec:lower}

In this section, we follow \cite[Lemma~2.6 and Theorem~2.7]{MR855753} closely.
We first derive a near-diagonal lower bound and then conclude the sharp lower bound. 
We recall some important facts on scaling and Galilean invariance for the Kolmogorov equation with rough diffusion coefficient. 
For $  (t_1,x_1,v_1), (t_2,x_2,v_2) \in \R^{1+2d}$ we define
\begin{equation*}
	(t_1,x_1,v_1) \circ (t_2,x_2,v_2) := (t_1+t_2,x_1+x_2+t_2v_1,v_1+v_2).
\end{equation*}
The pair $(\R^{1+2d},\circ)$ is a non-commutative group. 
Every $(t,x,v)$ admits a right inverse calculated as $(t,x,v)^{-1} = (-t,-x+tv,-v)$. 
For $r>0$, we set the scaling relation
\begin{equation*}
  [\delta_r f](t,x,v) = f(r^2t,r^3x,rv).
\end{equation*}

 The Kolmogorov equation \eqref{eq:int:kol} is {structurally invariant} with respect to the translation along $\circ$ and the scaling $\delta_r$. 
 To be more precise if $f$ is a weak solution to~\eqref{eq:int:kol}, then $(t,x,v) \mapsto f((t_0,x_0,v_0)\circ (t,x,v))$ and $(t,x,v) \mapsto [\delta_r f](t,x,v)$ are also weak solutions to~\eqref{eq:int:kol} with respective diffusion matrices $\mathfrak a ((t_0,x_0,v_0) \circ \cdot)$ and $\delta_r \mathfrak a$, which both satisfy \eqref{eq:int:H1}--\eqref{eq:int:H2} with the same constants $\lambda$ and $\Lambda$.

\begin{lemma}\label{lem:neardiag}
	There exists $c_0=c_0(d,\lambda,\Lambda)>0$ and a universal $\rho_0\in(0,1]$ such that
	\[
		\Gamma(t,x,v,s,y,w) \ge  \frac{c_0}{(t-s)^{2d}}
	\]
	whenever $t>s$ and for any $(x,v),(y,w) \in \R^{2d}$ with
	\[
	|v-w|\le \rho_0\sqrt{t-s},\qquad|x-y-(t-s)w|\le \rho_0(t-s)^{3/2}.
	\]
\end{lemma}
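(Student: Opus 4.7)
The plan is to adapt the Fabes--Stroock strategy from the parabolic to the kinetic setting: we combine the Chapman--Kolmogorov reproducing identity for the fundamental solution with Jensen's inequality to reduce the pointwise lower bound to the two $G$-type integral bounds established above. By the group translation $\circ$ and the scaling $\delta_r$, both of which preserve the ellipticity constants in \eqref{eq:int:H1}--\eqref{eq:int:H2}, it suffices to prove the following normalized statement: there exist $c_0, \rho_0 > 0$, depending only on $d, \lambda, \Lambda$, such that $\Gamma(1, x, v, 0, 0, 0) \ge c_0$ for all $|x|, |v| \le \rho_0$. Translating by $(s, y, w)^{-1} = (-s, -y + sw, -w)$ places the source at the origin and replaces $(t, x, v)$ by $(t - s, x - y - (t - s) w, v - w)$; then the scaling $\delta_{1/\sqrt{t-s}}$ rescales time to $1$ and produces the prefactor $(t-s)^{-2d}$.

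\medskip

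For the normalized problem, we split at the midtime $\tau = 1/2$ via the reproducing identity
\begin{equation*}
  \Gamma(1, x, v, 0, 0, 0) = \int_{\R^{2d}} \Gamma(1, x, v, 1/2, z, u) \, \Gamma(1/2, z, u, 0, 0, 0) \, \dx (z, u),
\end{equation*}
and introduce the probability density $p(z, u) := 4^d \ee^{-\pi (8 |z|^2 + 2 |u|^2)}$. The exponents $8$ and $2$ are the unique choice for which the substitution $(y, w) = (2^{3/2} z, 2^{1/2} u)$---which is precisely the one induced by the kinetic scaling $\delta_r$ with $r = 1/\sqrt{2}$---turns $p$ into the standard Nash Gaussian $\ee^{-\pi(|y|^2 + |w|^2)}$. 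Applying Jensen's inequality for the concave function $\log$ against the probability measure $p \, \dx (z, u)$ yields
\begin{align*}
  \log \Gamma(1, x, v, 0, 0, 0)
  &\ge \int p(z, u) \log \Gamma(1, x, v, 1/2, z, u) \, \dx (z, u) \\
  &\hphantom{\ge}\; + \int p(z, u) \log \Gamma(1/2, z, u, 0, 0, 0) \, \dx (z, u) - \int p \log p \, \dx (z, u),
\end{align*}
where the last term is a finite universal constant (the negative Gaussian entropy of $p$).

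\medskip

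Each of the first two integrals is bounded below by a universal constant. For the term with $\Gamma(1/2, z, u, 0, 0, 0)$, the scaling with $r = 1/\sqrt{2}$ identifies it with $2^{2d}\,\Gamma_{\tilde \fra}(1, 2^{3/2} z, 2^{1/2} u, 0, 0, 0)$, where $\tilde \fra = \delta_{1/\sqrt{2}} \fra$ satisfies \eqref{eq:int:H1}--\eqref{eq:int:H2} with the same constants $\lambda, \Lambda$; the substitution $(y, w) = (2^{3/2} z, 2^{1/2} u)$ then reduces our integral (up to a universal additive constant) to exactly the one controlled by Lemma~\ref{lem:nashG} applied to $\Gamma_{\tilde \fra}$ at $(x, v) = (0, 0)$, giving a lower bound $\ge -C$. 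For the term with $\Gamma(1, x, v, 1/2, z, u)$, the same scaling identifies it with $2^{2d}\,\Gamma_{\tilde \fra}(2, 2^{3/2} x, 2^{1/2} v, 1, 2^{3/2} z, 2^{1/2} u)$, and Corollary~\ref{cor:nashG2} then applies at the point $(2^{3/2} x, 2^{1/2} v)$ provided $8 |x|^2 + 2 |v|^2 \le 1$, a condition ensured for $|x|, |v| \le \rho_0$ with, e.g., $\rho_0 = 1/\sqrt{10}$. Combining the two lower bounds with the entropy term yields $\log \Gamma(1, x, v, 0, 0, 0) \ge -C'$, i.e.\ $\Gamma(1, x, v, 0, 0, 0) \ge c_0 := \ee^{-C'}$, as required. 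The delicate point is the coordination between the Gaussian in Jensen's step and the two time-rescalings: its exponents must be tuned so that under the scalings mapping $[0, 1/2]$ and $[1/2, 1]$ respectively to $[0, 1]$ and $[1, 2]$ it becomes precisely the standard Nash weight; once the weights $(8, 2)$ are identified, the remainder reduces to bookkeeping of scaling powers and normalization constants.
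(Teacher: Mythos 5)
Your proposal is correct and rests on exactly the same three pillars as the paper's proof: the Chapman--Kolmogorov (semigroup) identity at a midpoint, Jensen's inequality for $\log$, and the two Nash $G$ bounds (Lemma~\ref{lem:nashG} and Corollary~\ref{cor:nashG2}). The difference is purely one of normalisation. The paper rescales the original problem to the time interval $[0,2]$ with midpoint $1$; this is the natural choice because Lemma~\ref{lem:nashG} controls the $G$-integral at time $1$ from a source at time $0$, and Corollary~\ref{cor:nashG2} controls it at time $1$ from an endpoint at time $2$, so the standard Gaussian weight $\varphi$ can be used directly for both halves without any further rescaling. You instead normalise to $[0,1]$ with midpoint $1/2$, which forces you to re-dilate each half by $\delta_{1/\sqrt 2}$ to land back on the time configurations in the two $G$ bounds; this in turn forces the anisotropic Gaussian $p(z,u)=4^d e^{-\pi(8|z|^2+2|u|^2)}$ in the Jensen step, tuned so that it becomes the Nash weight after the dilation. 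Your Jensen step also differs slightly: you use the importance-sampling form $\log\int F\ge \int p\log F-\int p\log p$, picking up a finite entropy term, whereas the paper first drops to $\int F\varphi$ using $\varphi\le 1$ and then applies Jensen against $\varphi$, avoiding the entropy term altogether. Both routes are valid; the paper's choice of $[0,2]$ simply removes the bookkeeping you correctly carry out.
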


\begin{proof}
  By scaling and Galilean invariance, it suffices to prove the bound for
  $(s,y,w) = (0,0,0)$, $t = 2$, $\abs{x} \le \rho_0$ and $\abs{v} \le \rho_0$. Note that
  this rescaling will change $\fra$ and hence $\Gamma$ but not the ellipticity
  bounds, which are the only quantities appearing in the estimates. Let $\varphi(\xi,\eta)=\ee^{-\pi(|\xi|^2+|\eta|^2)}$, so $\int_{\R^{2d}}\varphi \dx(\xi,\eta)=1$ and $\varphi \le 1$.

  We fix such $(x,v)$ and then write via the semigroup-property (proven in
  \cite[Theorem~4.14~(i)]{auscher_fundamental_2024}) at the midpoint equal to
  $1$:
	\begin{align*}
		\Gamma(2, x, v,0,0,0)&=\int_{\R^{2d}}\Gamma(2, x, v,1,\xi,\eta)\,\Gamma(1,\xi,\eta,0,0,0)\dx (\xi,\eta)\\
		&\ge \int_{\R^{2d}}\Gamma(2, x, v,1,\xi,\eta)\,\Gamma(1,\xi,\eta,0,0,0) \varphi(\xi,\eta)\dx (\xi,\eta)
	\end{align*}
	By Jensen's inequality for the logarithm and the measure $\varphi(\xi,\eta) \dx (\xi,\eta)$ we deduce
	\begin{align*}
		\log\Gamma(2, x, v,0,0,0)
		&\ge \int_{\R^{2d}}\log\Gamma(2, x, v,1,\xi,\eta)\varphi(\xi,\eta)\dx (\xi,\eta)\\
		&\quad+\int_{\R^{2d}}\log\Gamma(1,\xi,\eta,0,0,0)\varphi(\xi,\eta)\dx (\xi,\eta).
	\end{align*}
	The lower bound is now an immediate consequence of Corollary \ref{cor:nashG2} (as $\abs{(x,v)} \le 1$) for the first term and Lemma \ref{lem:nashG} for the second term.
\end{proof}

We iterate this bound to obtain the sharp decay. 

\begin{proof}[Proof of Theorem \ref{thm:lower}]
We first reduce to a normalised statement. Left-translate by $(s,y,w)^{-1}$ so that $(s,y,w)\mapsto (0,0,0)$ and $(t,x,v)\mapsto (\tau,X,V)$ with
\begin{equation*}
	\tau=t-s,\qquad X:=x-y-\tau w,\qquad V:=v-w.
\end{equation*}
We dilate by $r=\tau^{-1/2}$ so that the time gap becomes~$1$. By the scaling identity of~$\Gamma$,
\begin{equation*}
	\Gamma(t,x,v,s,y,w)=\tau^{-2d}\widetilde\Gamma(1,\bar X,\bar V,0,0,0),
\end{equation*}
where $(\bar X,\bar V)=(X/\tau^{3/2},V/\tau^{1/2})$ and $\widetilde\Gamma$ is the kernel for the scaled field $\delta_\tau \mathfrak a$, still satisfying \eqref{eq:int:H1} and \eqref{eq:int:H2}. 

It suffices to prove
\begin{equation*}
	\widetilde\Gamma(1,\bar X,\bar V,0,0,0)\ \gtrsim\ \exp\big[{-}C_0(|\bar X|^2+|\bar V|^2)\big],
\end{equation*}
with constants depending only on $(d,\lambda,\Lambda)$; this will yield the claimed estimate after undoing the normalisation.

\medskip
Fix $(\bar X,\bar V)\in\R^{2d}$. Choose the smallest integer $k \in \N$ with
\begin{equation*}
	k\ \ge\ k_0\Big( |\bar V|^2+|\bar X|^2\Big),
\end{equation*}
for $k_0=k_0(\lambda,\Lambda)$ to be set large enough below, and partition time by $t_j=j/k$, $j=0,\ldots,k$.

We build reference centres $\{(x_j,v_j)\}_{j=0}^{k}$ in $\R^{2d}$ satisfying
\begin{equation*}
	(x_0,v_0)=(0,0),\qquad (x_k,v_k)=(\bar X,\bar V),
\end{equation*}
\begin{equation} \label{eq:transport-rec}
	x_{j}-x_{j-1}= \Delta t v_{j-1},\; 1\le j\le k,\qquad \Delta t:=1/k,
\end{equation}
and the {small-increment} condition
\begin{equation} \label{eq:vel-incr}
	|v_j-v_{j-1}|  \le \frac{\rho_0}{2}\sqrt{\Delta t}, \; 1\le j\le k
\end{equation}
where $\rho_0$ is the constant from Lemma~\ref{lem:neardiag}. 

One concrete way to produce such a sequence is: take the linear interpolation $v_j^{\rm lin}=\frac{j}{k}\bar V$, and correct it by a quadratic profile $\phi_j$ with $\phi_0=\phi_k=0$ so that $\sum_{j=0}^{k-1} (v_j^{\rm lin}+\phi_j)=\bar X/\Delta t$. 
The increments of a quadratic profile satisfy $|\phi_j-\phi_{j-1}|\lesssim |\bar X|/k^2$. 
Hence for $k$ larger than a dimensional multiple of $|\bar V|^2+|\bar X|^2$, the bound \eqref{eq:vel-incr} holds, while \eqref{eq:transport-rec} then determines $\{x_j\}$ uniquely with $x_0=0$ and $x_k=\bar X$.

\medskip

For a small parameter $\eta\in(0,\rho_0/4]$, define the product set
\begin{equation*}
	\mathcal S_\eta:=\prod_{j=1}^{k-1}\Big(B(x_j,\ \eta\Delta t^{3/2})\times B(v_j,\ \eta\Delta t^{1/2})\Big) \subset (\R^{2d})^{k-1}.
\end{equation*}
Let $((\xi_1,\eta_1),\dots,(\xi_{k-1},\eta_{k-1}))\in\mathcal S_\eta$ and set $(\xi_0,\eta_0)=(0,0)$ and $(\xi_k,\eta_k)=(\bar X,\bar V)$, then for each step $j=1,\dots,k$ the {near-diagonal} hypotheses of Lemma~\ref{lem:neardiag} hold between the points $(t_{j-1},\xi_{j-1},\eta_{j-1})$ and $(t_j,\xi_j,\eta_j)$. Indeed,
\begin{equation*}
	|\eta_j-\eta_{j-1}| \le |\eta_j-v_j|+|v_j-v_{j-1}|+|v_{j-1}-\eta_{j-1}| \le \eta\sqrt{\Delta t}+\frac{\rho_0}{2}\sqrt{\Delta t}+\eta\sqrt{\Delta t} \le \rho_0\sqrt{\Delta t},
\end{equation*}
and, using \eqref{eq:transport-rec},
\begin{align*}
	|\xi_j-\xi_{j-1}-\Delta t\eta_{j-1}| &\le |\xi_j-x_j|+|x_j-x_{j-1}-\Delta tv_{j-1}|+\Delta t|\eta_{j-1}-v_{j-1}| \\
	&\le \eta(\Delta t)^{3/2}+0+\eta(\Delta t)^{3/2} \le \frac{\rho_0}{2}(\Delta t)^{3/2}.
\end{align*}
Therefore, by Lemma~\ref{lem:neardiag},
\begin{equation*}
	\Gamma(t_j,\xi_j,\eta_j,t_{j-1},\xi_{j-1},\eta_{j-1}) \ge\ \frac{c_0}{(\Delta t)^{2d}},\qquad j=1,\dots,k.
\end{equation*}

Applying the semigroup property $(k-1)$-times and restricting the integration to $\mathcal S_\eta$,
\begin{equation*}
	\widetilde\Gamma(1,\bar X,\bar V,0,0,0)
\ge \int_{\mathcal S_\eta}\ \prod_{j=1}^{k}\Gamma(t_j,\xi_j,\eta_j,t_{j-1},\xi_{j-1},\eta_{j-1})
\prod_{j=1}^{k-1} \dx (\xi_j, \eta_j)
\ \ge\ (c_0(\Delta t)^{-2d})^{k}|\mathcal S_\eta|.
\end{equation*}
We have
\begin{equation*}
	|\mathcal S_\eta|=\prod_{j=1}^{k-1} \big(c_d\eta^{2d}(\Delta t)^{2d}\big)
= \big(c_d\eta^{2d}\big)^{k-1}(\Delta t)^{2d(k-1)},
\end{equation*}
where $c_d>0$ is a dimensional constant.
Hence
\begin{equation*}
	\widetilde\Gamma(1,\bar X,\bar V,0,0,0) \ge (\Delta t)^{-2d}\big(c_0 c_d\eta^{2d}\big)^{k}c_d^{-1}\eta^{-2d} \ge\ C_1\big(\alpha\big)^{k},
\end{equation*}
with $C_1=C_1(d,\lambda,\Lambda)>0$ and $\alpha:=c_0 c_d\eta^{2d}\in(0,1)$ for $\eta$ sufficiently small.
Moreover, we dropped the factor $(\Delta t)^{-2d} = k^{2d}$. 
Finally, with $k\asymp |\bar V|^2+|\bar X|^2$, this gives
\begin{equation*}
	\widetilde\Gamma(1,\bar X,\bar V,0,0,0) \ge C_1\exp\big[{-}C_2 (|\bar V|^2+|\bar X|^2)\big],
\end{equation*}
for some $C_2=C_2(d,\lambda,\Lambda)>0$. 
\end{proof}

\bibliographystyle{plain}

\begin{thebibliography}{10}

\bibitem{anceschi2024poincare}
Francesca Anceschi, Helge Dietert, Jessica Guerand, Am\'elie Loher, Cl\'ement Mouhot, and Annalaura Rebucci.
\newblock {P}oincar\'e inequality and quantitative {D}e {G}iorgi method for
  hypoelliptic operators.
\newblock arXiv:2401.12194, 2024.

\bibitem{MR217444}
Donald~G. Aronson.
\newblock Bounds for the fundamental solution of a parabolic equation.
\newblock {\em Bull. Amer. Math. Soc.}, 73:890--896, 1967.

\bibitem{MR435594}
Donald~G. Aronson.
\newblock Non-negative solutions of linear parabolic equations.
\newblock {\em Ann. Scuola Norm. Sup. Pisa Cl. Sci. (3)}, 22:607--694, 1968.

\bibitem{auscher_weak_2024}
Pascal Auscher, Cyril Imbert, and Lukas Niebel.
\newblock Weak solutions to {Kolmogorov--Fokker--Planck} equations: regularity,
  existence and uniqueness.
\newblock arXiv:2403.17464, 2024.

\bibitem{auscher_fundamental_2024}
Pascal Auscher, Cyril Imbert, and Lukas Niebel.
\newblock Fundamental solutions to {K}olmogorov--{F}okker--{P}lanck equations
  with rough coefficients: Existence, uniqueness, upper estimates.
\newblock {\em SIAM Journal on Mathematical Analysis}, 57(2):2114--2137, 2025.

\bibitem{de_giorgi_sulla_1957}
Ennio De~Giorgi.
\newblock Sulla differenziabilit\`a e l'analiticit\`a delle estremali degli
  integrali multipli regolari.
\newblock {\em Mem. Accad. Sci. Torino. Cl. Sci. Fis. Mat. Nat. (3)}, 3:25--43,
  1957.

\bibitem{dietert2022regularity}
Helge Dietert and Jonas Hirsch.
\newblock Regularity for rough hypoelliptic equations.
\newblock arXiv:2209.08077, 2022.

\bibitem{dmnz_critical_2025}
Helge Dietert, Cl\'ement Mouhot, Lukas Niebel, and Rico Zacher.
\newblock Critical trajectories in kinetic geometry.
\newblock arXiv:2508.14868, 2025.

\bibitem{MR855753}
Eugene~Barry Fabes and Daniel~W. Stroock.
\newblock A new proof of {M}oser's parabolic {H}arnack inequality using the old
  ideas of {N}ash.
\newblock {\em Arch. Rational Mech. Anal.}, 96(4):327--338, 1986.

\bibitem{golse_harnack_2019}
Fran\c{c}ois Golse, Cyril Imbert, Cl\'ement Mouhot, and Alexis~F. Vasseur.
\newblock Harnack inequality for kinetic {Fokker}--{Planck} equations with
  rough coefficients and application to the {Landau} equation.
\newblock {\em Annali della Scuola Normale Superiore di Pisa. Classe di
  Scienze. Serie V}, 19(1):253--295, 2019.

\bibitem{guerand_log-transform_2022}
Jessica Guerand and Cyril Imbert.
\newblock Log-transform and the weak {Harnack} inequality for kinetic
  {Fokker}--{Planck} equations.
\newblock {\em Journal of the Institute of Mathematics of Jussieu}, 22(6):2749--2774, 2023.

\bibitem{guerand_quantitative_2022}
Jessica Guerand and Cl\'ement Mouhot.
\newblock Quantitative {De} {Giorgi} methods in kinetic theory.
\newblock {\em Journal de l'\'Ecole polytechnique. Math\'ematiques},
  9:1159--1181, 2022.
  
\bibitem{hlw_nash_2024}
Christopher Henderson, Giacomo Lucertini and Weinan Wang.
\newblock A kinetic {N}ash inequality and precise boundary behavior of the kinetic {F}okker--{P}lanck equation.
\newblock arXiv:2407.08785, 2024.
   
\bibitem{MR3930576}
Helge Holden and Ragni Piene, editors.
\newblock {\em The {A}bel {P}rize 2013--2017}.
\newblock Springer, Cham, 2019.

\bibitem{MR4181953}
Alberto Lanconelli, Andrea Pascucci, and Sergio Polidoro.
\newblock Gaussian lower bounds for non-homogeneous {K}olmogorov equations with
  measurable coefficients.
\newblock {\em J. Evol. Equ.}, 20(4):1399--1417, 2020.

\bibitem{moser_harnacks_1961}
J\"urgen Moser.
\newblock On {Harnack}'s theorem for elliptic differential equations.
\newblock {\em Communications on Pure and Applied Mathematics}, 14:577--591,
  1961.

\bibitem{moser_harnack_1964}
J\"urgen Moser.
\newblock A {Harnack} inequality for parabolic differential equations.
\newblock {\em Communications on Pure and Applied Mathematics}, 17:101--134,
  1964.

\bibitem{moser_correction_1967}
J\"urgen Moser.
\newblock Correction to: ``{A} {Harnack} inequality for parabolic differential
  equations''.
\newblock {\em Communications on Pure and Applied Mathematics}, 20:231--236,
  1967.

\bibitem{moser_pointwise_1971}
J\"urgen Moser.
\newblock On a pointwise estimate for parabolic differential equations.
\newblock {\em Communications on Pure and Applied Mathematics}, 24:727--740,
  1971.

\bibitem{nash_continuity_1958}
John~Forbes Nash, Jr.
\newblock Continuity of solutions of parabolic and elliptic equations.
\newblock {\em American Journal of Mathematics}, 80:931--954, 1958.

\bibitem{niebel2023kinetic}
Lukas Niebel and Rico Zacher.
\newblock On a kinetic {P}oincar\'e{} inequality and beyond.
\newblock {\em J. Funct. Anal.}, 289(1):Paper No. 110899, 18, 2025.

\bibitem{MR4943511}
Lukas Niebel and Rico Zacher.
\newblock A trajectorial interpretation of {M}oser's proof of the {H}arnack
  inequality.
\newblock {\em Ann. Sc. Norm. Super. Pisa Cl. Sci. (5)}, 26(2):605--617, 2025.

\bibitem{pascucci_mosers_2004}
Andrea Pascucci and Sergio Polidoro.
\newblock The {Moser}'s iterative method for a class of ultraparabolic
  equations.
\newblock {\em Communications in Contemporary Mathematics}, 6(3):395--417,
  2004.

\bibitem{wang_calpha_2011}
Wendong Wang and Liqun Zhang.
\newblock The ${C}^\alpha$ regularity of weak solutions of ultraparabolic
  equations.
\newblock {\em Discrete and Continuous Dynamical Systems. Series A},
  29(3):1261--1275, 2011.

\end{thebibliography}

\end{document}